\theoremstyle{plain}
\newtheorem{thm}{Theorem}[section]
\newtheorem{lem}[thm]{Lemma}
\newtheorem{prop}[thm]{Proposition}
\newtheorem{defin}[thm]{Definition}
\theoremstyle{definition}
\newtheorem{rem}[thm]{Remark}
\numberwithin{equation}{section}
\def\rightharpoonupfill@{\arrowfill@\relbar\relbar\rightharpoonup}
\newcommand{\xrightharpoonup}[2][]{\ext@arrow 0359\rightharpoonupfill@{#1}{#2}}
\newcommand{\R}{\mathbb R}
\newcommand{\dist}{{\rm dist}}
\newcommand{\diam}{{\rm diam}}
\begin{document}
 
\title{Rectifiability of non Euclidean planar self-contracted curves}
\author{Antoine Lemenant}
%\cortext[cor1]{Corresponding author}
%\author[A. Chambolle]{Antonin Chambolle}
%\author[A. Lemenant]{Antoine Lemenant}

%\address[J.-F. Babadjian]{Universit\'e Pierre et Marie Curie -- Paris 6, CNRS, UMR 7598 Laboratoire Jacques-Louis Lions, Paris, F-75005, France}
%\email{jean-francois.babadjian@upmc.fr}

%\address[A. Chambolle]{CMAP, UMR 7641, Ecole Polytechnique, CNRS, 91128 Palaiseau, France}
%\email{antonin.chambolle@polytechnique.fr}

%\address[A. Lemenant]{Universit\'e Paris Diderot -- Paris 7, CNRS, UMR 7598 Laboratoire Jacques-Louis Lions, Paris, F-75005, France}
%\ead{lemenant@ljll.univ-paris-diderot.fr}

%\date{\today}

%\begin{keyword}
%Monotonicity formula \sep Mumford-Shah \sep Minimal cone \sep Global minimizer, Blow-up Limit, Spherical eigenvalues
%\end{keyword}
\maketitle

\begin{abstract} 
We prove that any self-contracted curve in $\R^2$ endowed with a $C^2$ and strictly convex norm, has finite length. The proof follows from the study of the curve bisector of two points in $\R^2$ for a general norm together with an adaptation of the argument used in \cite{sc}.

%\noindent MSC : 49Q20, 35A15, 35B08, 35B44

 \end{abstract}

%\newpage
%\vspace{2cm}
%\tableofcontents

\section{Introduction}
 The concept of self-contracted curve was first introduced by Daniilidis, Ley and Sabourau in \cite[Definition
1.2]{DLS}. For a given metric space $(E,d)$ and  a possibly
unbounded interval $I$ of ${\mathbb{R}}$, a map
$\gamma:I\rightarrow E$ is called 
a self-contracted curve, if for every $[a,b]\subset I$, the
real-valued function
\[
t \mapsto d(\gamma(t),\gamma(b))
\]
is non-increasing on $[a,b]$. Sometimes to emphasis on the used distance we will also say $d$-self-contracted curve or $\|\cdot\|$-self-contracted curve when the distance comes from a norm.

The origin of this definition comes from the fact that, in the Euclidean space  $(\R^n,|\cdot |)$, any solution  of the gradient descent of a proper convex fonction $f:{\mathbb{R}}^{n}\rightarrow{\mathbb{R}}$, i.e. a solution $\gamma$ of the gradient system
\begin{equation}
\left\{
\begin{array}
[c]{l}
\gamma^{\prime}(t)=-\nabla f(\gamma(t))\qquad t>0,\\[4pt]
\gamma(0)=x_{0}\in{\mathbb{R}}^{n}
\end{array}
\right.  \label{diffeq}
\end{equation}
 is a  self-contracted curve (actually it is enough for $f$ to be quasiconvex, that is, its sublevel sets to be convex).

One of the main question about solutions   of \eqref{diffeq}, is whether or not
bounded solutions are of finite length. If $f$ is analytic (and not necessarily convex) then it follows from the famous  \L ojasiewicz  inequality \cite{Loja63}, while it fails for  general $C^\infty$ functions  \cite[p.~12]{PalDem82}. Now for a  general convex function,   the \L ojasiewicz  inequality does not need to hold (see \cite[Section~4.3]{BDLM}). However, bounded solutions have finite length.

The latter follows from  the main result of   \cite{DLS} (see also \cite{MP}), which focuses on the following more general question, which is purely metric:
\begin{eqnarray} 
\textit{Does any bounded self-contracted curve have finite length?\label{question1}}
\end{eqnarray}

In   \cite{DLS} it is proved that the answer is yes in a two dimensional Euclidean space.   It was then established in higher dimensions in \cite{sc}, still in the  Euclidean setting. However, it is no more true in a general infinite dimensional Hilbert space.

A natural question is whether the answer to \eqref{question1} remains yes in a finite dimensional space,  when the metric space is not Euclidean anymore. In the recent paper \cite{scR}, it is proved that it holds true on a Riemannian manifold.

In  this paper we prove that  the answer to \eqref{question1} is yes in $\R^2$ endowed with a strictly convex $C^2$ norm.

\begin{thm} \label{mainth}Let  $\|\cdot \|$ be a $C^2$ and strictly convex norm on $\R^2$, let $I\subset \R$ be an interval  and let $\gamma : I\to \R^2$ be a    $\|\cdot\|$-self-contracted curve. Then there exists a  constant $C>0$ depending only on $\|\cdot \|$ such that 
\begin{eqnarray}
\ell(\gamma)\leq C\diam(K(\gamma)) \label{length}
\end{eqnarray}
where  $\ell(\gamma)$ is the length of the curve, $K(\gamma)$ is the closed convex hull of the support of the curve $\gamma$, and $\diam(K)$ is the diameter of $K$. In particular, any bounded $\|\cdot\|$-self-contracted curve has finite length.\end{thm}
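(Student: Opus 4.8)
The plan is to adapt the Euclidean strategy of \cite{sc} to the non-Euclidean norm $\|\cdot\|$, with the main new ingredient being a careful quantitative control of the geometry of the norm's \emph{bisector}. Recall that the key obstruction to proving finiteness of length is to convert the self-contraction property---which controls how distances decrease along the curve---into a control on the \emph{direction} of the curve, and ultimately into a summable estimate on total turning or total displacement. In the Euclidean case one exploits that the set of points equidistant from two fixed points is an affine hyperplane, so that the self-contraction inequality $\|\gamma(t)-\gamma(b)\|\le\|\gamma(s)-\gamma(b)\|$ for $s\le t$ forces the velocity to lie in a half-space determined by $\gamma(b)-\gamma(t)$. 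The heart of the difficulty here is that for a general norm the bisector of two points is no longer a line but only a $C^1$ (or $C^2$, given the smoothness and strict convexity hypotheses) curve, and one must understand its curvature and asymptotic behavior well enough to reproduce the Euclidean cone/monotonicity estimates.

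First I would establish the basic regularity of a self-contracted curve: as in the Euclidean setting, one shows $\gamma$ admits a locally Lipschitz (hence a.e.\ differentiable) reparametrization, so that it suffices to control $\int_I\|\gamma'(t)\|\,dt$. Second, I would fix the geometric heart of the argument: for each pair of values $s<t$, the self-contraction property and the differentiability of $\|\cdot\|^2$ give that $\gamma'(t)$ makes a nonnegative pairing with the gradient $D(\|\cdot\|)$ evaluated at $\gamma(t)-\gamma(b)$, for every $b\ge t$. This replaces the Euclidean scalar product condition $\langle\gamma'(t),\gamma(b)-\gamma(t)\rangle\ge 0$. The bisector study promised in the abstract enters precisely here: it quantifies the aperture of the admissible cone of directions $\gamma'(t)$ as $b$ ranges over the future of the curve, and the strict convexity and $C^2$ hypotheses ensure this cone is contained in a genuine half-plane whose normal direction varies in a controlled, Lipschitz fashion. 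I would package this into a lemma stating that the future of the curve is contained in a ``generalized cone'' (a sublevel-type region bounded by bisector arcs) with an opening angle bounded away from $\pi$ by a constant depending only on $\|\cdot\|$.

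Third, with the cone property in hand, I would run the covering/iteration scheme of \cite{sc}: one partitions the parameter interval into pieces over which the curve direction is roughly constant, uses the cone confinement to show that on each such piece the displacement is comparable to the decrease in a suitable ``width'' or projected-diameter functional, and sums these contributions. The strict convexity is what guarantees that a uniform fraction of the curve's motion is in a monotone direction, yielding a geometric decay of the relevant diameters and hence a convergent series bounding $\ell(\gamma)$ by $C\,\diam(K(\gamma))$. The constant $C$ depends only on the modulus of convexity and the $C^2$ norm of $\|\cdot\|$, and not on the curve, which gives the scale-invariant estimate \eqref{length}; the final clause on bounded curves is then immediate since $\diam(K(\gamma))=\diam(\overline{\gamma(I)})<\infty$.

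The main obstacle I anticipate is the bisector analysis in the second step. In the Euclidean case the perpendicular-bisector is flat and the cone is literally a half-space, so the monotonicity is exact; for a curved bisector one must show that the accumulated error from the curvature, integrated along the curve, does not destroy the geometric decay. Concretely, the danger is that the admissible cone could slowly rotate in a way that allows the curve to spiral with infinite total length while still being self-contracted. Ruling this out requires a uniform, curve-independent bound on how fast the bisector's tangent direction can turn---equivalently a uniform bound on the curvature of the bisector in terms of $\|\cdot\|$---and this is exactly where the $C^2$ and strict convexity hypotheses must be used quantitatively rather than merely qualitatively. I expect the bulk of the technical work to lie in proving such a uniform bisector-curvature estimate and in verifying that it is strong enough to be absorbed into the summation of step three.
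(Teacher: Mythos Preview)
Your outline has the right instinct---adapt the argument of \cite{sc} by replacing the Euclidean perpendicular bisector with a quantitative study of the $\|\cdot\|$-bisector---but two of the steps as written would not go through, and the bisector estimate you isolate is not the one that actually does the work.

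First, the reduction to a locally Lipschitz reparametrization and the use of $\gamma'(t)$ is a genuine gap. The definition of self-contracted curve in this paper (and in \cite{sc}) does \emph{not} assume continuity, and the conclusion $\ell(\gamma)\le C\,\diam(K(\gamma))$ is meant to hold for possibly discontinuous maps. You cannot obtain a Lipschitz reparametrization without already knowing finite length, so the argument is circular. The paper avoids this entirely: it works only with finite increasing sequences $t_0<\dots<t_m$ and a discrete telescoping inequality, never touching $\gamma'$. Concretely, it shows that for any $x'\prec x$ on the curve one has
\[
W(\Omega(x)) + c_0\,|x-x'| \;\le\; W(\Omega(x')),
\]
where $W$ is the mean width and $\Omega(x)$ the convex hull of the piece of curve after $x$; summing this over a partition gives \eqref{length} immediately.

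Second, the bisector control you aim for---a uniform bound on the curvature of $M(a,b)$ to prevent slow rotation of an admissible cone---is not what the paper proves or needs. The operative fact (Proposition~\ref{prop2}) is much cruder and purely one-dimensional: there exists $\kappa<\tfrac12$, depending only on $\|\cdot\|$, such that $M(a,b)$ lies in a strip of width $2\kappa\|b-a\|$ around the line $L(a,b)=\tfrac{a+b}{2}+L_{b-a}$ (the ``dual'' line through the midpoint). Hence the future $\Gamma(x)$ lies on one side of an affine line whose normal $\nu$ makes an angle at least $\alpha_0>0$ with the segment direction $v_0$, and moreover the point $x_0$ (a slight shift of $x'$ toward $x$) is separated from $\Gamma(x)$ by a margin $\tfrac{\lambda}{2}=\tfrac12(\tfrac12-\kappa)>0$. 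No curvature of $M(a,b)$ is ever used; only that $\kappa<\tfrac12$. The second ingredient you are missing is Lemma~\ref{estimateL}, an angle bound $\langle \xi_0(y),\xi_0(y')\rangle\ge -\cos\alpha_0$ for $y,y'$ in the future of $x_0$, which replaces the Euclidean fact that this inner product is nonnegative. These two estimates together allow one to perturb $\nu$ to a vector $\bar v$ satisfying $\langle \bar v,v_0\rangle\ge\tau$ and $\langle \bar v,\xi_0(y)\rangle\le -\delta$ for all $y\in\Gamma(x)$, and then the mean-width inequality above follows by integrating projections over a small cap around $\bar v$. There is no covering, no iteration, and no geometric decay of diameters: the entire argument is a single telescoping sum.
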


Notice that to establish the inequality $\eqref{length}$, the norm which is used to compute $\ell(\gamma)$ or $\diam(K)$ does not really matter, since all the norms are equivalent on $\R^2$. Actually, we will  establish \eqref{length} with $\ell(\gamma)$ and $\diam(K)$ computed using the Euclidean norm on $\R^2$, denoted $|\cdot |$. More precisely, our strategy is to reproduce the Euclidean argument used in \cite{sc}, in the simple case of dimension 2, and prove that if $\gamma$ is no more $|\cdot|$-self-contracted but merely $\|\cdot \|$-self-contracted, then the proof  still works almost  by the same way.

However, there are some notable differences. The main one concerns the starting point of the proof, which consists, for any $t_0<t_1$, of localizing  the curve  ``after'' $t_1$ on one side of the perpendicular bisector  of the segment $[\gamma(t_0),\gamma(t_1)]$. This   follows directly from the definition of being self-contracted.  Indeed, for $t_0<t_1<t$, since the function   $s\mapsto d(\gamma(s),\gamma(t))$  is non-decreasing on $[t_0,t]$ we have  
$$d(\gamma(t_0),\gamma(t))\leq d(\gamma(t_1),\gamma(t))$$
which means that $\gamma(t)$ is situated on one side of the bisector 
$$M(\gamma(t_0),\gamma(t_1)):=\{x \in \R^2\; : \; d(\gamma(t_0),x)= d(\gamma(t_1),x)\}.$$

 For the Euclidean distance this yields a ``separating line'' for the curve after $t_0$, whose direction is orthogonal to $\gamma(t_0)-\gamma(t_1)$. If the distance is not Euclidean anymore, the line segment bisector  is no longer perpendicular, and not even   a line anymore. However, if the distance is coming from a $C^2$ and strictly convex norm, we prove that the   bisector is a curve which is asymptotic to a line at infinity, whose direction is well identified: in a certain sense it is a direction  dual  to $\gamma(t_0)-\gamma(t_1)$. Moreover, the bisector  stays close enough to the middle line having the same direction. After noticing those facts, we are able to adapt the proof of \cite{sc} and this is how we prove Theorem \ref{mainth}.

\medskip
 {\bf Acknowledgements:} The author wishes to thank Aris Daniilidis, Estibalitz Durand-Cartagena,  Michael Goldman  and Vincent Munnier for useful discussions on the subject of this paper. This work was partially supported by the project  ANR-12-BS01-0014-01 GEOMETRYA  financed by the French Agence Nationale de la Recherche (ANR).

\subsection{Notation and terminology} In this paper we will work on $\R^2$ endowed with the Euclidean norm $|x|$ with scalar product $\langle x,y\rangle$. The Euclidean ball with center $x$ and radius $R$ will be denoted by $B(x,R)$ and  $\mathbb{S}^{N-1}$ is the Euclidean unit sphere. For $v\in \R^2$ we will denote by $v^\bot$ the image of $v$ the rotation of angle $\frac{\pi}{2}$ in the anticlockwise direction.

Eventually, on this Euclidean space $\R^2$ we shall also consider another non Euclidean norm that will be denoted by $\|\cdot\|$. The associated ball will be denoted by $B_{\|\cdot \|}(x,r)$ and the sphere for the norm $\|\cdot \|$ will then be denoted by $\partial B_{\|\cdot \|}(x,r)$.

 %a certain norm denoted by $\|\cdot\|$, but at the same time we shall use the Euclidean structure of $\R^2$ in order to make estimates. To avoid any confusion, we will always denote the standard Euclidean norm by $|x|=\sqrt{\langle x,x\rangle}$ where $\langle x,y\rangle$  is the canonical scalar product and by $\|\cdot\|$ another, non Euclidean, norm. The Euclidean unit sphere will be denoted by $\mathbb{S}^1$. The ball for the Euclidean norm will be denoted by $B(x,r)$ while the ball for any other norm $\|\cdot \|$ will be denoted by $B_{\|\cdot \|}(x,r)$. The sphere for the norm $\|\cdot \|$ will then be denoted by $\partial B_{\|\cdot \|}(x,r)$.

A curve is a mapping   $\gamma:I \to \R^2$, not necessarily continuous,  from  some interval $I\subset \R$. The length of a curve is the quantity 
$$\ell(\gamma):=\sup\left\{ \sum_{i=0}^{m-1} |\gamma(t_i)-\gamma(t_{i+1})|\right\},$$
where the supremum is taken over all finite increasing sequences $t_0<t_1<\dots<t_m$ that lie in the interval $I$.

The mean width of a convex $K\subset \R^2$ is the quantity 
$$W(K)=\frac{1}{2\pi}\int_{\mathbb{S}^1}\mathcal{H}^1(P_u(K))du$$
where $P_u$ is the orthogonal projection onto the real line in $\R^2$ directed by the vector $u\in \mathbb{S}^1$, and $\mathcal{H}^1$ denotes  the 1-dimensional Hausdorff measure. Although we shall not use it in this paper, let us mention the following nice identity valid   for any compact and  convex set $K\subset \R^2$,
$$W(K)=\frac{\mathcal{H}^{1}(\partial K)}{\pi}.$$
 It is also clear from the definition that 
$$W(K)\leq \diam(K).$$

If $x,y\in \R^2$ we will denote by $[x,y]\subset \R^2$ the segment between $x$ and $y$.

A norm $\|\cdot \|$ is said to be $C^2$ if $x\mapsto \|x \|$ is of class $C^2$ on $\R^2\setminus \{0\}$. This implies that the sphere  $\partial B_{\|\cdot \|}(x,r)$ is a $C^2$ manifold.
A norm is said to be $C^2$ and strictly convex if, for any $x\in \R^2\setminus \{0\}$ we have $D^2\|x\| >0 $ (in the sense of quadratic forms). This implies that the ball $B_{\|\cdot\|}(0,1)$ is strictly convex in the geometrical sense, i.e. for any couple of points $x, y \in  \partial B_{\|\cdot \|}(0,1)$ we have $[x, y] \cap   \partial B_{\|\cdot \|}(0,1)=\{x,y\}$. In other words the segment $[x,y]$ without its extremities $x$ and $y$ lies in the interior of the ball $B_{\|\cdot \|}(0,1)$. This means that  the sphere  $\partial B_{\|\cdot \|}(0,1)=\{x,y\}$ contains no ``flat-parts''.

%%%%%%%%%%%%%%%%%%%%%%%%%%
\section{Preliminaries about the curve bisector}
 Let $\|\cdot \|$ be a $C^2$ and strictly convex norm on $\R^2$, and $a,b\in \R^2$ be any given points. Then we focus on the curve bisector defined by 
 $$M(a,b):=\{z\in \R^2 \; : \; \|a-z\|=\|b-z\|\}.$$

If the norm is  Euclidean, it is well known that $M(a,b)$ is the line passing through the middle point $\frac{a+b}{2}$ and  perpendicular  to $b-a$. But for a general norm, the curve bisector may not be perpendicular and may even not be a line. However, it is not difficult to see  that $M(a,b)$ is always asymptotically converging to a line at infinity, and the direction of that line is given by the  dual vector to $b-a$ (i.e. the vector on the sphere $B_{\|\cdot\|(0,1)}$ at which the tangent to the sphere has  direction $b-a$).  Figure \ref{picture3} represents the curve bisector of two points for the norm $\|\cdot \|_4$ in $\R^2$.

\begin{figure}[htbp]
\begin{center}
\includegraphics[width=11cm]{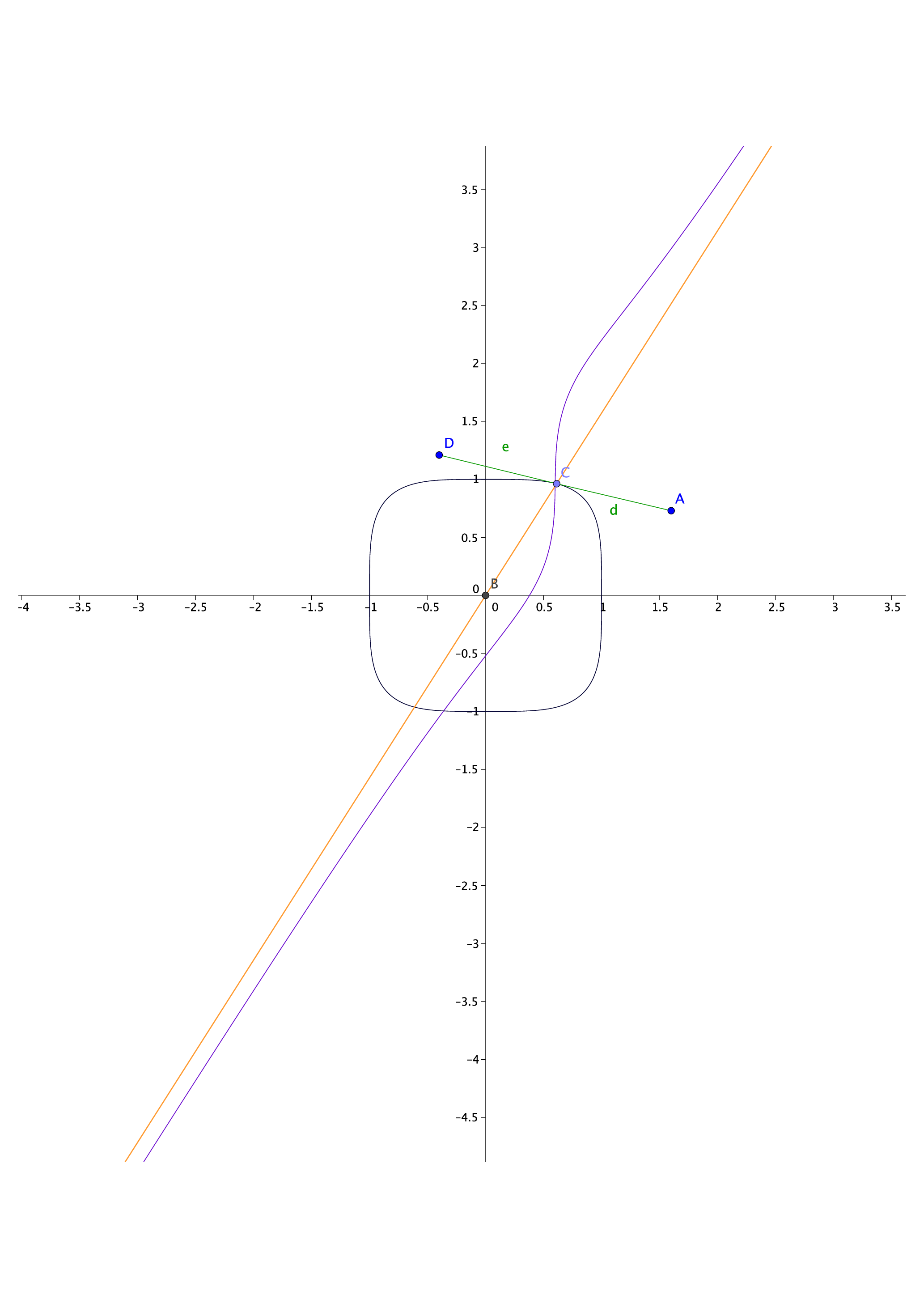}
\end{center}
\caption{The curve bisector of the segment $[A,D]$ for the norm $\|\cdot \|_4$ is a  curve which is asymptotic to a line at infinity, whose direction is given by the point on the sphere $\partial B_{\|\cdot \|_4}(0,1)$ at which the tangent to the sphere has the same direction as the  segment [A,D]. } \label{picture3}
\end{figure}

To prove this, we shall need to consider the direction defined in the following definition.

%\begin{prop} Let $\|\cdot \|$ be a $C^1$ and strictly convex norm on $\R^2$, and $a,b\in \R^2$ be any given points. Then for any $R>0$ there exists only two points $z^+$ and $z^-$ in $M(a,b)$ such that $\|z^+-a\|=\|z^--a\|=R$.
%\end{prop}

\begin{defin}[Definition of $L_x$]Let $\|\cdot \|$ be a $C^2$ and strictly convex norm on $\R^2$. For any $x \in \partial B_{\|\cdot\|}(0,1)$ we define the vectorial line ``dual to $x$'', denoted $L_x$, in the following way: there exists exactly two points $y_1,y_2\in \partial B_{\|\cdot\|}(0,1)$ such that the tangent line to $ \partial B_{\|\cdot\|}(0,1)$ at point $y_i$ is directed by $x$. Then $L_x$ is the line passing through $y_1$ and $y_2$. If $x\not  =0$ and $\|x\|\not = 1$, we denote by $L_x$ the line $L_{x/\|x\|}$.
\end{defin}

\begin{rem}\label{defalpha0} A geometrical way to find the $y_i$ is as follows. Let $H_x\subset \R^2\setminus B_{\|\cdot\|}(0,1)$ be a line directed by $x$. Then translate this line  in the direction $x^\bot$ until it touches the sphere $ \partial B_{\|\cdot\|}(0,1)$. The touching point is then one of the $y_i$. Since the norm is  $C^2$ and strictly convex, there exists only two of those points, which from the symmetry of the ball with respect to the origin must satisfy $y_1=-y_2$. For example, in Figure 1, the line passing through the origin is $L_{D-A}=L_{A-D}$.
\end{rem}

\begin{rem} \label{defalpha0} For all $x \in  \partial B_{\|\cdot\|}(0,1)$, let $\alpha(x)\in [0,\frac{\pi}{2}]$ be the smallest angle between the two lines $L_x$ and $\R x$. Then it is easy to see that $\frac{\pi}{2}\geq \inf_{x \in   \partial B_{\|\cdot\|}(0,1)} \alpha(x)=:\alpha_0>0$.  Indeed, the infimum is actually a minimum, and it cannot be zero because the origin lies in the interior of the ball.  Roughly speaking,  the angle $\alpha_0$ quantifies how far the ball for $\|\cdot\|$ is from being the Euclidean ball, in the sense of how  the tangent line to the sphere can be far from making an angle of   $\frac{\pi}{2}$ with the radius. Another way to define $\alpha_0$ is as follows: for every $x\in  \partial B_{\|\cdot\|}(0,1)$ let $\nu_x$ be  outer unit normal vector to $\partial B_{\|\cdot\|}(0,1)$ at point $x$. Then $\alpha_0$ is the unique real number in $[0,\frac{\pi}{2}]$ solution to  
\begin{eqnarray}
\sin (\alpha_0)=\inf_{x\in \partial B_{\|\cdot\|}(0,1) } \langle  \nu_x, \frac{x}{|x|}\rangle .\label{defalpha02}
\end{eqnarray}
\end{rem}

The following proposition says that $M(a,b)$ converges to the line 
\begin{eqnarray}
L(a,b):=\frac{b+a}{2}+L_{b-a} \label{orthline}
\end{eqnarray} 
at infinity. 

\begin{prop} \label{firstprop} Let $\|\cdot \|$ be a $C^2$ and strictly convex norm on $\R^2$. Then for every $\varepsilon>0$ there exists $R>0$ such that for all couple of points $a,b \in \R^2$ satisfying $a\not = b$ and $\|b-a\|=1$, 
$$\sup \{\dist(z,L(a,b)) \; : \; z\in M(a,b)\setminus B(a,R) \}\leq \varepsilon,$$
where $L(a,b)$ is defined in \eqref{orthline}, and $\dist$ is the Euclidean distance.\end{prop}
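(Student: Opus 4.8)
The plan is to reduce to a one-parameter family and then run a second-order asymptotic analysis of the defining equation. Write $N(x)=\|x\|$, so $N$ is $C^2$ on $\R^2\setminus\{0\}$, convex and $1$-homogeneous; hence $\nabla N$ is $0$-homogeneous and, differentiating $\nabla N(tx)=\nabla N(x)$ in $t$, one gets the basic identity $D^2N(x)\,x=0$, so $D^2N(x)$ vanishes on $\R x$ and is positive on $\R x^\bot$. First I would remove the dependence on $a,b$ by translating the midpoint to the origin: setting $z'=z-\frac{a+b}{2}$ and $v=b-a$ (so $\|v\|=1$), one has $z\in M(a,b)\iff z'\in M(-v/2,v/2)$, $\dist(z,L(a,b))=\dist(z',L_v)$, and $z\notin B(a,R)\Rightarrow |z'|\ge R-\tfrac12\sup_{\|w\|=1}|w|$. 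Since $v$ now ranges over the compact, Euclidean-bounded set $\{\|v\|=1\}$, it suffices to prove: for every $\e>0$ there is $R'>0$, independent of $v$, with $\dist(z',L_v)\le\e$ whenever $z'\in M(-v/2,v/2)$ and $|z'|\ge R'$.

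The core computation controls the sign function $g(z):=\|z-a\|-\|z-b\|=N(z+v/2)-N(z-v/2)$, whose zero set is the bisector. I would use the exact identity
\[
g(z)=\int_{-1/2}^{1/2}\nabla N(z+tv)\cdot v\,dt .
\]
Writing $\rho=|z|$, $u=z/\rho$ and using $0$-homogeneity, $\nabla N(z+tv)=\nabla N\big(u+\tfrac{t}{\rho}v\big)$, so a first-order expansion in $t/\rho$ gives $\nabla N\big(u+\tfrac{t}{\rho}v\big)\cdot v=\langle\nabla N(u),v\rangle+\tfrac{t}{\rho}\langle D^2N(u)v,v\rangle+r(t,\rho)$. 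The crucial point is that the $t$-linear term integrates to zero over the symmetric interval $[-\tfrac12,\tfrac12]$; since $N$ is only $C^2$, the remainder is estimated through the (uniform) modulus of continuity of $D^2N$ on a compact annulus, yielding
\[
g(z)=\langle\nabla N(u),v\rangle+\Theta(\rho)/\rho,\qquad \Theta(\rho)\to0\ \text{as }\rho\to\infty,
\]
uniformly in $u$ and in $v$. This exact cancellation is precisely what forces the asymptote to pass through the midpoint (offset zero), rather than through a nearby parallel line.

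Next I would set up the transversality of $g_v(u):=\langle\nabla N(u),v\rangle$ on $\mathbb{S}^1$. Its zeros are the directions $u$ with $\nabla N(u)\perp v$, i.e. with the tangent to $\partial B_{\|\cdot\|}(0,1)$ parallel to $v$; these are exactly the two antipodal directions $\pm u_0(v)$ defining $L_v=\R u_0(v)$ (the Gauss map being a degree-one homeomorphism of $\mathbb{S}^1$). Differentiating along $\mathbb{S}^1$ and using $D^2N(u)u=0$ one finds $\partial_\theta g_v(u_0)=\langle D^2N(u_0)u_0^\bot,v\rangle=\lambda(u_0)\langle u_0^\bot,v\rangle$ with $\lambda(u_0)=\langle u_0^\bot,D^2N(u_0)u_0^\bot\rangle>0$; the factor $\langle u_0^\bot,v\rangle$ is bounded away from $0$ because the angle between $v$ and $u_0$ lies in $[\alpha_0,\pi-\alpha_0]$ (this is where the constant $\alpha_0$ of Remark~\ref{defalpha0} enters). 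By continuity of $D^2N$ and compactness of $\mathbb{S}^1\times\{\|v\|=1\}$, there are $c>0$ and $\delta_0\in(0,\tfrac{\pi}{2})$ so that $|g_v(u)|\ge c\,\dist_{\mathbb{S}^1}(u,\{\pm u_0(v)\})$ whenever $\dist_{\mathbb{S}^1}(u,\{\pm u_0(v)\})\le\delta_0$, and $|g_v(u)|\ge c\delta_0$ otherwise, uniformly in $v$.

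Finally I would combine the two estimates. On the bisector $g(z)=0$, so $|g_v(u)|\le\Theta(\rho)/\rho$; for $\rho$ large this is $<c\delta_0$, which by the previous step forces $u$ into the $\delta_0$-neighborhood of $\{\pm u_0(v)\}$ and then $\dist_{\mathbb{S}^1}(u,\{\pm u_0(v)\})\le |g_v(u)|/c\le\Theta(\rho)/(c\rho)$. Since both $\pm u_0(v)$ lie on $L_v$ and $\dist(z,L_v)=\rho\sin\beta\le\rho\beta$ with $\beta=\dist_{\mathbb{S}^1}(u,\{\pm u_0(v)\})$, we obtain $\dist(z,L_v)\le\Theta(\rho)/c\to0$ uniformly in $v$, which is the required estimate after choosing $R'$ with $\Theta(R')/c\le\e$. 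The main obstacle is the combination of the two uniformities: establishing the second-order cancellation with only $C^2$ data (so that the asymptote is exactly $L_v$ and not a parallel line), and upgrading the pointwise transversality at the zeros into a uniform linear lower bound on $g_v$ over the whole compact family of directions $v$.
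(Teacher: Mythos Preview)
Your argument is correct and genuinely different from the paper's. The paper proceeds geometrically: it parametrizes the bisector $M(0,v)$ by the chords $[a_t,b_t]$ of $\partial B_{\|\cdot\|}(0,1)$ parallel to $v$, rescaled so that the origin (which lies on $M(a_t,b_t)$) sits at distance $R=1/\|a_t-b_t\|$; then it uses a local parabola approximation of the sphere near the tangent point $-y$ to show that the midpoint $m_t$ deviates from the line $L_v$ by at most $C|a_t-b_t|^2$, which after rescaling gives $\dist(z_R,L_v)\le C'/R$. Your approach is analytic: you expand the scalar function $g(z)=N(z+v/2)-N(z-v/2)$ via the integral identity, observe that by $0$-homogeneity of $\nabla N$ the leading term is $g_v(u)=\langle\nabla N(u),v\rangle$ with an $o(1/\rho)$ remainder (the cancellation of the $t$-linear term over the symmetric interval is exactly what pins the asymptote at the midpoint), and then invoke a uniform transversality of $g_v$ at its zeros $\pm u_0(v)$ to convert smallness of $g_v(u)$ into smallness of the angle. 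The paper's route is more pictorial and delivers the sharper rate $O(1/R)$ under the same $C^2$ hypothesis; your route makes the structural reason for the midpoint asymptote completely transparent, handles the uniformity in $v$ via standard compactness rather than geometry, and would generalize more readily (e.g.\ to higher dimensions), at the cost of only yielding $\dist(z,L_v)\le\Theta(\rho)/c=o(1)$ governed by the modulus of continuity of $D^2N$ --- which is all the proposition requires.
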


\begin{proof} Let $a,b \in \R^2$ be satisfying $a\not = b$ and $\|b-a\|=1$, and let  $z_R \in M(a,b)$ be such that $\|z_R-a\|=R\to +\infty$. Assume for simplicity that $z_R$ stays ``above'' the line directed by $b-a$. Then  it is easy to see that $\frac{z_R-a}{R}$ and $\frac{z_R-b}{R}$ both converge to the same direction $w\in \partial B_{\|\cdot \|}(0,1)$, the one directing the line $L_{b-a}$.  To see this, we let  $R_n$ be a subsequence making $w_n:=\frac{z_{R_n}-a}{R_n}$ and  $w_n':=\frac{z_{R_n}-b}{R_n}$ converging on the compact set $\partial B_{\|\cdot \|}(0,1)$ to some $w$ and $w'$.  We actually have that $w=w'$ because
$$\|w_n-w'_n\|=\|b-a\|/R_n=1/R_n\to 0.$$
 Now we claim that $w$ is directing the line $L_{x}$, where $x=b-a$. To see this, it is enough showing that the tangent line to $ \partial B_{\|\cdot\|}(0,1)$ at point $w$ is directed by $x$. This follows from the fact that, since 
 $ \partial B_{\|\cdot\|}(0,1)$ is a $C^2$ manifold, the rescaled secants $(w_n-w_n')/\|w_n-w_n'\|$, which is nothing but $a-b$,  converges to the tangent line  to $\partial B_{\|\cdot\|}(0,1)$ at the point $w$.

 Now to prove the proposition, we need to go one order further, namely use  a $C^2$ argument. For this purpose, we adopt the following point of view: let  $a,b \in \R^2$ be such that $\|a-b\|=1$ and let $v=b-a$ the direction of the segment $[a,b]$. For all $t\in \R$, we denote by $H_t:=(0,t)+\R v$ the line  directed by $v$ in $\R^2$, of height $t$. Let   $t_0>0$ be the maximum  $t$ such that  $H_t\cap B_{\|\cdot\|(0,1)}\not = \emptyset$. Let also $a_t$ and $b_t$ be the two points of $H_t\cap \partial B_{\|\cdot\|}(0,1)$, for $t\in(-t_0,t_0)$. Then for any $t\in (-t_0,t_0)$, it is clear that the origin  lies on the curve bisector $M(a_t,b_t)$ (see Figure \ref{picture4}). Actually,  by using the rescaling $(b_t-a_t)/\|b_t-a_t\|$  we get this way a  complete parametrization on $(-t_0,t_0)$ of $M(0,v)$, the curve bisector of a the segment of unit length directed by $v$ starting from the origin (which is the same as $M(a,b)$, after translation to the origin).

Moreover, when $t$ converges to $-t_0$ or  $t_0$, the points  $a_t$ and $b_t$ converge to some points $-y$ and $y$ on the sphere $\partial B_{\|\cdot\|}(0,1)$, the ones at  which the tangent line to $\partial B_{\|\cdot\|}(0,1)$ is directed by $v$. The line passing through $-y$ and $y$ is exactly $L_{v}$.

Next, for any $R\geq \frac{1}{2}$, there exists a unique $t_R\in (-t_0,0)$ such that $\frac{1}{\|a_{t_R}-b_{t_R}\|}=R$. Therefore, if $z_R \in M(0,v)$ is such that $\|z_R\|=R$, 
\begin{eqnarray}
dist(z_R, L(0,v))\leq R|m_{t_R}-l_{t_R}|, \label{estii}
\end{eqnarray}
where $m_{t}=(a_{t}+b_{t})/2$ is the middle point of the segment $[a_t,b_t]$, and $l_t$ is the intersecting point of the two segments $[0,-y]$ and $[a_t,b_t]$ (see Figure \ref{picture4}).

\begin{figure}[htbp]
\begin{center}
% \usepackage[usenames,dvipsnames]{pstricks}
% \usepackage{epsfig}
% \usepackage{pst-grad} % For gradients
% \usepackage{pst-plot} % For axes
% \usepackage[space]{grffile} % For spaces in paths
% \usepackage{etoolbox} % For spaces in paths
% \makeatletter % For spaces in paths
% \patchcmd\Gread@eps{\@inputcheck#1 }{\@inputcheck"#1"\relax}{}{}
% \makeatother
% \psscalebox{1.0 1.0} % Change this value to rescale the drawing.
{
\begin{pspicture}(0,-3.2303808)(12.106958,3.2303808)
\rput{-45.0}(1.6970041,4.5161686){\psellipse[linecolor=black, linewidth=0.048, dimen=outer](6.3,0.20961925)(2.3,3.6)}
\psdots[linecolor=black, dotsize=0.12](6.4,0.20961925)
\psline[linecolor=black, linewidth=0.048](0.4,-0.99038076)(11.0,-0.99038076)
\psline[linecolor=black, linewidth=0.03](0.0,-2.7903807)(11.2,-2.7903807)
\psline[linecolor=black, linewidth=0.03](6.4,0.20961925)(5.0,-2.7903807)
\psdots[linecolor=black, dotsize=0.12](3.3,-0.99038076)
\psdots[linecolor=black, dotsize=0.12](8.28,-0.99038076)
\rput[bl](6.36,0.62961924){\normalsize{$0$}}
\psline[linecolor=black, linewidth=0.02](3.32,-0.9703807)(6.4,0.20961925)
\psline[linecolor=black, linewidth=0.02](6.4,0.20961925)(8.26,-0.9703807)
\rput[bl](11.26,-1.0903808){\normalsize{$H_t$}}
\rput[bl](2.56,-1.4303807){\normalsize{$a_t$}}
\rput[bl](8.2,-1.4303807){\normalsize{$b_t$}}
\psdots[linecolor=black, dotsize=0.12](5.58,-0.99038076)
\rput[bl](5.233333,-0.8037141){\normalsize{$m_t$}}
\psdots[linecolor=black, dotsize=0.12](5.02,-2.7903807)
\rput[bl](4.48,-3.2303808){\normalsize{$-y$}}
\rput[bl](4.98,-0.010380747){\normalsize{$1$}}
\rput[bl](7.4,-0.35038075){\normalsize{$1$}}
\rput[bl](5.82,2.2496192){\normalsize{$B_{\|\cdot\|}(0,1)$}}
\psline[linecolor=black, linewidth=0.04, tbarsize=0.07055555cm 5.0,arrowsize=0.05291667cm 2.0,arrowlength=1.4,arrowinset=0.4]{|->}(9.8,0.40961924)(12.2,0.40961924)
\rput[bl](10.4,0.60961926){\normalsize{$v$}}
\psdots[linecolor=black, dotsize=0.12](5.84,-0.99038076)
\rput[bl](6.1466665,-0.85704744){$l_t$}
\end{pspicture}
}
\end{center}
\caption{The distance from $M(a_t,b_t)$ to $L(a,b)$ is the same as the distance from $m_t$ to  $L(b-a)$ (the line passing through $0$ and $-y$), which is less than the distance between $m_t$ and $l_t$.}
 \label{picture4}
\end{figure}
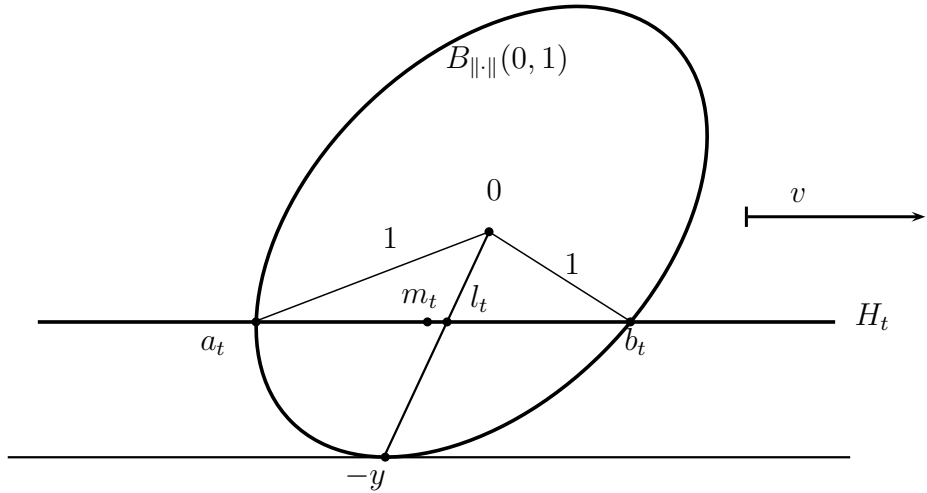

Now, we claim that, when $|a_{t}-b_{t}|\to 0$, 
\begin{eqnarray}
|m_{t}-l_t|\leq C|a_{t}-b_{t}|^2+o(|a_t-b_t|^2), \label{mtr}
\end{eqnarray}
for some $C>0$ (depending only on $\|\cdot\|$). This is enough to conclude because returning to \eqref{estii} we get, using that $\|a_{t_R}-b_{t_R}\|=\frac{1}{R}$,
$$dist(z_R, L(0,v))\leq R|m_{t_R}-l_{t_R}|\leq C'\frac{1}{R}\underset{R\to +\infty}{\longrightarrow} 0.$$

Now to prove \eqref{mtr}, we work locally around the point $-y \in \partial B_{\|\cdot\|}(0,1)$. Let $T_{-y}$ be the tangent line at point $-y$. Since the norm is $C^2$ and strictly convex, in a certain well chosen coordinate system, the sphere  $\partial B_{\|\cdot\|}(0,1)$  almost  coïncides with the graph of $s\mapsto As^2$ for some $A>0$,  up to some error of second order.  Consequently,  in that coordinate system, the segments $[a_t-b_t]$ which have the same direction as $T_{-y}$, almost coïncides with the segment $[-x_s,x_s]$ where $x_s=(s,As^2)$, again up to some error of second order.  On the other hand the angle $\theta$ between the radius $[0,-y]$  of the ball $B_{\|\cdot\|}(0,1)$ and the  vertical line orthogonal to $T_{-y}$, is  at most $\frac{\pi}{2}-\alpha_0$, where $\alpha_0$  is defined in Remark \ref{defalpha0} (see Figure \ref{picture5}).

\begin{figure}[htbp] 
\begin{center}
% \usepackage[usenames,dvipsnames]{pstricks}
% \usepackage{epsfig}
% \usepackage{pst-grad} % For gradients
% \usepackage{pst-plot} % For axes
% \usepackage[space]{grffile} % For spaces in paths
% \usepackage{etoolbox} % For spaces in paths
% \makeatletter % For spaces in paths
% \patchcmd\Gread@eps{\@inputcheck#1 }{\@inputcheck"#1"\relax}{}{}
% \makeatother
% % User Packages:
% 
% 
\psscalebox{1.0 1.0} % Change this value to rescale the drawing.
{
\begin{pspicture}(0,-3.365909)(16.363636,3.365909)
\definecolor{colour0}{rgb}{0.6,0.6,0.6}
\definecolor{colour1}{rgb}{0.4,0.4,0.4}
\psline[linecolor=black, linewidth=0.04](0.0,-2.9113636)(16.363636,-2.9113636)
\psarc[linecolor=black, linewidth=0.04, dimen=outer](8.309091,2.9977272){5.909091}{200.13608}{338.1978}
\psdots[linecolor=black, dotsize=0.12](8.236363,-2.8931818)
\rput[bl](8.109091,-3.365909){$-y$}
\psline[linecolor=black, linewidth=0.04](8.236363,-2.9477272)(8.236363,2.3795455)
\psline[linecolor=black, linewidth=0.04](8.2,-2.875)(6.3454547,3.125)
\rput[bl](6.6363635,3.0159092){$[0,-y]$}
\rput[bl](14.4,-2.7659092){$T_{-y}$}
\psarc[linecolor=black, linewidth=0.04, dimen=outer](8.245455,-3.175){2.6818182}{89.95318}{105.477295}
\rput[bl](8.745455,0.37954545){$\theta\leq \frac{\pi}{2}-\alpha_0$}
\psline[linecolor=black, linewidth=0.04, arrowsize=0.05291667cm 2.0,arrowlength=1.4,arrowinset=0.0]{<-}(7.981818,-0.49318182)(8.709091,0.37954545)
\psline[linecolor=black, linewidth=0.04](3.3090909,-1.9840909)(15.0,-1.9840909)
\rput[bl](8.290909,-2.6022727){$As^2$}
\rput[bl](4.7454543,-1.8022727){$-x_s$}
\rput[bl](11.345454,-1.7840909){$x_s$}
\psline[linecolor=black, linewidth=0.04, linestyle=dashed, dash=0.17638889cm 0.10583334cm](11.436363,-2.0204546)(11.418181,-2.8931818)
\rput[bl](11.363636,-3.3295455){$s$}
\psbezier[linecolor=colour0, linewidth=0.04](8.2,-2.9113636)(3.9818182,-2.7766464)(2.6181817,-1.3402828)(1.7454545,0.16136363636363968)
\psbezier[linecolor=colour0, linewidth=0.04](8.254545,-2.9113636)(12.090909,-2.875)(13.945455,-0.5840909)(14.654546,0.470454545454545)
\rput[bl](14.236363,0.48863637){\textcolor{colour1}{$\partial B_{\|\cdot\|}(0,1)$}}
\psdots[linecolor=black, dotsize=0.12](5.1272726,-1.9840909)
\psdots[linecolor=black, dotsize=0.12](11.418181,-1.9840909)
\psdots[linecolor=black, dotsize=0.12](12.0,-1.9659091)
\psdots[linecolor=black, dotsize=0.12](3.8727272,-1.9840909)
\rput[bl](3.5636363,-2.293182){$a_t$}
\rput[bl](12.036364,-2.365909){$b_t$}
\end{pspicture}
}
\end{center}
\caption{Situation in the proof of \eqref{mtr}.}
\label{picture5}
\end{figure}
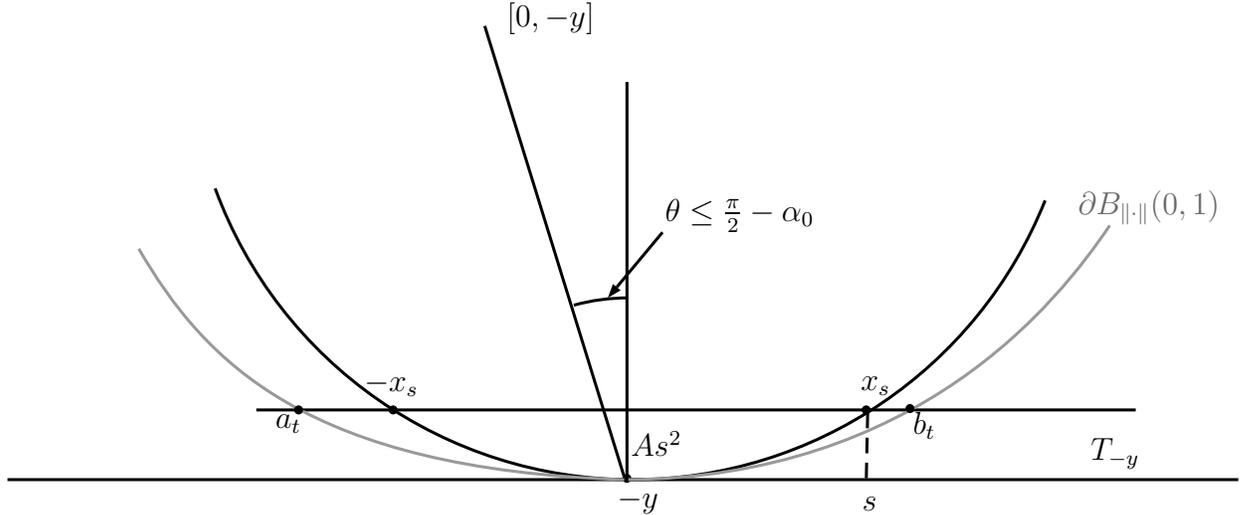

We deduce that 
$|m_{t}-l_t|\leq |A\sin(\theta)s^2|+o(s^2)=Cs^2+o(s^2),$
as desired. This achieves the proof of \eqref{mtr}, and of the proposition.

\end{proof}

\begin{defin}For any $x\in \R^2$ and $v\in \R^2$  with $\|v\|=1$ we denote by $Q_v(x)$ the projection of $x\in \R^2$ on $\R v$  parallely to $L_v$ (i.e. the $v$ component of $x$ written in the basis $(v,v')$ where $v'$ is a vector director for $L_v$). 
\end{defin}

The next proposition quantifies the maximum distance between $M(a,b)$ and the line $L(a,b)$. 

\begin{prop}\label{prop2} Let $\|\cdot \|$ be a $C^1$ and strictly convex norm on $\R^2$.  There exists $0<\kappa <1/2$ depending on the norm $\|\cdot \|$, such that for any given $a,b \in \R^2$, the curve bisector $M(a,b)$ is contained in the strip $S_\kappa(a,b)$ defined as 
$$S_{\kappa}(a,b):=\{x\in \R^2 \; : \;   |Q_{\frac{b-a}{\|b-a\|}}(x-\frac{a+b}{2})|\leq \kappa \|b-a\| \}.$$
\end{prop}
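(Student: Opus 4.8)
The plan is to reduce to a normalized configuration, establish the bound pointwise on the bisector from strict convexity, and then upgrade it to a \emph{uniform} constant $\kappa<1/2$ by a compactness argument fed by Proposition~\ref{firstprop}. By translation invariance of the whole construction and the $1$-homogeneity of the norm, it suffices to treat the case $\frac{a+b}{2}=0$ and $\|b-a\|=1$: write $v=b-a$, so $\|v\|=1$, $a=-v/2$, $b=v/2$, and $Q_v(x)=s$ is the coordinate of $x=sv+rv'$ in the basis $(v,v')$, where $v'$ directs $L_v$. In these terms the claim is $|Q_v(z)|\le\kappa$ for every $z\in M(-v/2,v/2)$, and the general statement follows by scaling by $\|b-a\|$ and translating by $\frac{a+b}{2}$, since $Q_v\big(\frac{z-(a+b)/2}{\|b-a\|}\big)=\frac{1}{\|b-a\|}Q_v\big(z-\frac{a+b}{2}\big)$.

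For the pointwise strict bound, fix $z\in M(-v/2,v/2)$ and set $\rho=\|z-a\|=\|z-b\|>0$. Then $z-a$ and $z-b$ are two distinct points of $\partial B_{\|\cdot\|}(0,\rho)$ whose difference is $v$; in particular they share the same $v'$-coordinate $r_0$, and $z=\tfrac12\big((z-a)+(z-b)\big)$ is the midpoint of the chord they span. By the very definition of $L_v$, the tangent to $\partial B_{\|\cdot\|}(0,\rho)$ is directed by $v$ exactly at the two points $\pm\rho y$ (with $\pm y$ the tangency points on the unit sphere), and these are precisely the points of $\partial B_{\|\cdot\|}(0,\rho)$ maximizing and minimizing the $v'$-coordinate; hence $r_0$ lies strictly between these two extreme values. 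Consequently $r_0 v'$ lies on the open segment $(-\rho y,\rho y)$, which by strict convexity is interior to $B_{\|\cdot\|}(0,\rho)$. Thus the line $L_v=\R v'$ crosses the interior of the chord $[z-b,z-a]$, so $Q_v(z-a)$ and $Q_v(z-b)$ have opposite signs; since $Q_v(z-a)-Q_v(z-b)=Q_v(v)=1$, we obtain $Q_v(z-a)\in(0,1)$ and $Q_v(z-b)\in(-1,0)$, whence $Q_v(z)=\tfrac12\big(Q_v(z-a)+Q_v(z-b)\big)\in(-\tfrac12,\tfrac12)$. This proves $|Q_v(z)|<\tfrac12$ for every bisector point, using only $C^1$-ness and strict convexity.

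The main difficulty is to turn this into a uniform $\kappa<1/2$, because $Q_v(z)$ could a priori approach $\pm1/2$ either as $z$ escapes to infinity or as the chord $[z-b,z-a]$ degenerates toward an extreme point $\pm\rho y$. I would argue by compactness. Set $\kappa_0:=\sup\{|Q_v(z)|:\|v\|=1,\ z\in M(-v/2,v/2)\}\le\tfrac12$ and suppose $\kappa_0=\tfrac12$; choose unit vectors $v_n$ and $z_n\in M(-v_n/2,v_n/2)$ with $|Q_{v_n}(z_n)|\to\tfrac12$. Passing to a subsequence, $v_n\to v_*$ on the compact sphere $\partial B_{\|\cdot\|}(0,1)$, and by continuity of $v\mapsto L_v$ (valid for a strictly convex $C^1$ norm, the tangency points depending continuously on $v$) the projections $Q_{v_n}$ converge to $Q_{v_*}$. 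Let $\rho_n=\|z_n+v_n/2\|$. If $(\rho_n)$ is bounded then $(z_n)$ is bounded and a further subsequence converges to some $z_*\in M(-v_*/2,v_*/2)$ with $|Q_{v_*}(z_*)|=\tfrac12$, contradicting the pointwise bound. If instead $\rho_n\to\infty$, then $|z_n-a_n|\to\infty$ and Proposition~\ref{firstprop}, applied uniformly over all pairs at unit $\|\cdot\|$-distance, forces $\dist\big(z_n,L(-v_n/2,v_n/2)\big)\to0$; since $L(-v_n/2,v_n/2)=\R v_n'$ and the angle between $v_n$ and $v_n'$ is bounded below by $\alpha_0>0$ (Remark~\ref{defalpha0}), this yields $|Q_{v_n}(z_n)|\to0$, again a contradiction. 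Hence $\kappa_0<\tfrac12$, and any $\kappa\in(\kappa_0,\tfrac12)$ (or any small positive number if $\kappa_0=0$) completes the proof.
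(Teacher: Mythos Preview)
Your proof is correct and follows essentially the same route as the paper: a pointwise strict bound $|Q_v(z)|<\tfrac12$ from (strict) convexity of the ball, upgraded to a uniform $\kappa<\tfrac12$ via a compactness--contradiction argument that uses Proposition~\ref{firstprop} to dispose of the case where the bisector point escapes to infinity. The only organizational differences are cosmetic: the paper works with the parameterization $t\mapsto(a_t,b_t,m_t)$ of the bisector introduced in the proof of Proposition~\ref{firstprop} and first uses Proposition~\ref{firstprop} to obtain a lower bound $\|a_n-b_n\|\ge\delta$ before passing to a compact limit, whereas you split directly into the bounded/unbounded dichotomy on $\rho_n$ and are more explicit about the dependence on the direction $v$ (invoking continuity of $v\mapsto L_v$, a point the paper leaves implicit).
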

\begin{proof} We first prove that for all $a$, $b$ in $\R^2$, $M(a,b)\subseteq S_{1/2}(a,b)$. Let $a,b$ be given. We can assume that $\|b-a\|=1$ and we denote by $v=b-a$. Adopting the same notation  $a_t, b_t, m_t$ as    in the proof of  Proposition \ref{firstprop},  we see that 
$$\sup \{|Q_v(x-v/2)| \; :  \; x \in M(0,v) \}= \sup_{t\in[-t_0,t_0]} \frac{|Q_v(m_t)|}{\|b_t-a_t\|}.$$
By convexity of the ball $B_{\|\cdot\|(0,1)}$ we have  $L_v\cap [a_t,b_t]\not =\emptyset$ for all $|t|\leq t_0$, from which we find that
$$ \sup_{t\in[-t_0,t_0]} \frac{|Q_v(m_t)|}{\|b_t-a_t\|}\leq 1/2,$$
 as claimed.

Now to pass from $1/2$ to some $\kappa<1/2$ we argue by contradiction. Assuming  the proposition is false, we can find $a_n:=a_{t_n}$, $b_n:=b_{t_n}$,   such that 
\begin{eqnarray}
\frac{|Q_v(m_n)|}{\|b_n-a_n\| }\underset{n\to +\infty}{\longrightarrow} 1/2, \label{essai2}
\end{eqnarray}
where $m_n:=(a_n+b_n)/2$. According to Proposition \ref{firstprop}, we can assume that 
\begin{eqnarray}
\inf_n \|a_n-b_n\|\geq \delta, \label{lowbound}
\end{eqnarray}
for some $\delta>0$. Indeed, let $R>0$ be given by Proposition \ref{firstprop} for  $\varepsilon:=1/4$ and let  $\delta:= 1/R$. Then, for all $a_t$ and $b_t$ satifying $r:=\|a_t-b_t\|\leq \delta$, we  have that $\|a_t/r\|=1/r\geq 1/\delta = R$ thus $(0,0)\in M(a_t/r,b_t/r)\setminus B(a_t/r,R)$ and therefore the conclusion of Proposition \ref{firstprop} says 
$$\dist((0,0),L(a_t/r,b_t/r))\leq  \varepsilon=\frac{1}{4}.$$
In particular this implies $|Q_v(\frac{a_t+b_t}{2})|/\|b_t-a_t\|\leq 1/4$. Hence, our sequence of $a_n$ and $b_n$ satisfying  \eqref{essai2} must satisfy  $\|a_n-b_n\|\geq \delta$ for $n$ large enough. Now by compactness of $\partial B_{\|\cdot\|}(0,1)$ we can assume that $a_n\to a= a_{t_0}$ and $b_n\to b=b_{t_0}$ satisfying $\|a-b\|\geq \delta$. By \eqref{essai2} we also have that 
\begin{eqnarray}
\frac{|Q_v(m)|}{\|b-a\| }= 1/2 ,\notag
\end{eqnarray}
where $m=(a+b)/2$. But this implies that $L_v\cap [a,b]$ is either the point $a$ or the point $b$, which is a contradiction because, by strict convexity of the ball $B$, the line $L_v$ must meet the segment $[a,b]$ only by its interior. This finishes the proof of the Proposition.
\end{proof}

\section{Rectifiability of $\|\cdot\|$-SC-curves}

%In the following we use the euclidean structure of $\R^2$. Namely, we will consider a $\|\cdot \|$-SC curve for some general norm $\|\cdot\|$, but working  in the Euclidean space $\R^2$ to compute some estimates (endowed with the euclidean norm $|\cdot|$ and scalar product $\langle\cdot,\cdot\rangle$).

In the sequel if $\gamma$ is a $\|\cdot\|$-self-contracted curve, we denote by $\Gamma:=\gamma(I)$ the image of the curve, and for all $x\in \Gamma$  we introduce the ``piece of curve after $x$''  namely,  
$$\Gamma(x):=\{y \in \Gamma \; : \; x\preceq y\},$$
where $\preceq$ denotes the order on the curve given by its parameterization. We will also  denote by $\Omega(x)$ the convexe hull of $\Gamma(x)$.

We start with a first lemma about the maximum aperture of the angle between $y'-x_0$ and $y-x_0$ when $x_0,y,y'$ are all lying on $\Gamma$ with $y$ and $y'$ after $x_0$. Namely, for an Euclidean self-contracted-curve, it is easy to see that whenever $x_0,y,y'\in \Gamma$ are satisfying $x_0\preceq y\preceq y'$ we have 
$$\langle y-x_0 , y'-x_0 \rangle\geq 0.$$
The way this is proved in \cite{sc} is as follows. From the self-contracted property we infer that
$$|y'-x_0|\geq |y'-y|.$$
Thus writing $y'-y=(y'-x_0)+x_0-y$ and squaring the estimates we get
$$|y'-x_0|^2\geq |y'-x_0|^2+|y-x_0|^2 -2\langle y'-x_0,y-x_0\rangle$$
which yields 
$$\langle y'-x_0,y-x_0\rangle\geq 0.$$

For a general norm which is no more Euclidean, we get a similar estimate from a different argument.

\begin{lem}\label{estimateL} Let $\|\cdot\|$ be a $C^2$ and strictly convex norm on $\R^2$. Let $\alpha_0$ be the constant of Remark \ref{defalpha0} and let $\gamma:I\to \R^2$ be a $\|\cdot\|$-self-contracted curve. Then for every $x_0,y,y'\in \Gamma$ satisfying $x_0\preceq y\preceq y'$ we have 
\begin{eqnarray}
\big\langle \frac{y-x_0}{|y-x_0|} , \frac{y'-x_0}{|y'-x_0|} \big\rangle\geq -\cos(\alpha_0). \label{boundcos}
\end{eqnarray}
\end{lem}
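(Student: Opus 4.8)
The plan is to turn the metric inequality coming from self-contraction into a one-sided constraint on the Euclidean direction of $y'-x_0$, and then to read off the angle bound from the geometry of the dual line $L_v$. After translating we may assume $x_0=0$ and that $y,y'\neq x_0$ (otherwise the normalized vectors in \eqref{boundcos} are not defined); write $v=y-x_0$ and $\hat v=v/\|v\|$, so that $Q_{\hat v}$ is well defined. Applying the self-contracted property on the parameter interval between $x_0$ and $y'$, with the moving point running from $x_0$ to $y$, gives $\|y'-y\|\le\|y'-x_0\|$. In other words $y'$ lies in the closed region $\overline{R_y}:=\{z:\|z-y\|\le\|z-x_0\|\}$, i.e. weakly on the $y$-side of the bisector $M(x_0,y)$.

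Next I would localize this side condition using Proposition \ref{prop2}. Since $M(x_0,y)\subseteq S_\kappa(x_0,y)$ with $\kappa<1/2$, the complement $\R^2\setminus S_\kappa(x_0,y)$ consists of two open half-planes, bounded by lines parallel to $L_v$ and separated in the $Q_{\hat v}$-coordinate; because $\kappa<1/2$ the points $x_0$ and $y$ fall into different half-planes, their $Q_{\hat v}$-coordinates being $0$ and $\|v\|$ while the strip sits in $[(\tfrac12-\kappa)\|v\|,(\tfrac12+\kappa)\|v\|]$. Each of these half-planes is connected and disjoint from $M(x_0,y)$, hence lies entirely in one of the two disjoint open regions $R_{x_0}:=\{\|z-x_0\|<\|z-y\|\}$ and $R_y:=\{\|z-y\|<\|z-x_0\|\}$ into which $M$ splits $\R^2\setminus M$; the half-plane containing $x_0$, namely $\{Q_{\hat v}<(\tfrac12-\kappa)\|v\|\}$, lies in $R_{x_0}$. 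Since $\overline{R_y}$ is disjoint from $R_{x_0}$, the point $y'$ cannot lie in that half-plane, whence $Q_{\hat v}(y'-x_0)\ge(\tfrac12-\kappa)\|v\|>0$. Geometrically this says precisely that $y'-x_0$ lies in the open half-plane $H^+$ bounded by the translate of $L_v$ through $x_0$ and containing $v$.

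Finally I would convert this half-plane membership into the stated cosine bound. The boundary line of $H^+$ is directed by $L_v$, which by Remark \ref{defalpha0} makes an angle $\alpha(v)\ge\alpha_0$ with the line $\R v$. Measuring directions by their Euclidean angle to $v$, the directions contained in $\overline{H^+}$ form a closed arc of length $\pi$ whose two endpoints (the two directions of $L_v$) make angles $\alpha(v)$ and $\pi-\alpha(v)$ with $v$; as $\alpha(v)\le\tfrac\pi2$, the largest angle attained is $\pi-\alpha(v)\le\pi-\alpha_0$, and it is attained only on the boundary. Since $y'-x_0$ lies in the open half-plane, the Euclidean angle $\theta$ between $y-x_0$ and $y'-x_0$ satisfies $\theta<\pi-\alpha_0$, so $\cos\theta>-\cos\alpha_0$, which gives \eqref{boundcos}.

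I expect the main obstacle to be the middle step: making rigorous that ``weakly on the $y$-side of the bisector'' forces $y'$ out of the $x_0$-half-plane. This rests on the separation of $\R^2\setminus M(x_0,y)$ into exactly the two regions $R_{x_0}$ and $R_y$ and on a connectedness argument combined with the strict inequality $\kappa<1/2$ from Proposition \ref{prop2}. Note that it is precisely the passage from the closed inequality $\|y'-y\|\le\|y'-x_0\|$ to the \emph{strictly} positive lower bound on $Q_{\hat v}(y'-x_0)$ that rules out the degenerate case $\theta=\pi$; without Proposition \ref{prop2} one would only obtain membership in the closed half-plane, which still suffices for the non-strict bound \eqref{boundcos}.
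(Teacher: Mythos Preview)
Your argument is correct but follows a genuinely different route from the paper's. The paper does \emph{not} invoke the bisector or Proposition~\ref{prop2}; instead it rewrites the self-contracted inequality as $x_0\notin B_{\|\cdot\|}(y',\|y-y'\|)$, normalizes so that $y'=0$ and $\|y\|=1$, and studies the optimization problem $\inf_{x\in\R^2\setminus B}\langle\frac{y-x}{|y-x|},\frac{-x}{|x|}\rangle$. A monotonicity computation along rays shows the infimum is attained on $\partial B$, and then convexity of $B$ gives $\langle y-x,\nu_x\rangle\ge 0$ for the inner normal $\nu_x$, from which the bound follows directly via the definition of $\alpha_0$ in \eqref{defalpha02}.

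Your approach replaces this ball geometry by the strip containment $M(x_0,y)\subseteq S_\kappa(x_0,y)$ from Proposition~\ref{prop2}: the half-plane $\{Q_{\hat v}<(\tfrac12-\kappa)\|v\|\}$ is connected, misses the bisector, and contains $x_0$, hence lies in $R_{x_0}$; since $y'\in\overline{R_y}$ this forces $Q_{\hat v}(y'-x_0)>0$, i.e.\ $y'-x_0$ lies in the open half-plane bounded by $L_v$ through $x_0$ on the side of $v$. The angle bound then comes from Remark~\ref{defalpha0} exactly as you write. This is valid and non-circular (Proposition~\ref{prop2} is independent of the present lemma), and it has the pleasant feature of reusing machinery already needed for the main theorem. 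The paper's argument, on the other hand, is more self-contained and elementary: it requires neither Proposition~\ref{firstprop} nor Proposition~\ref{prop2}, only convexity of $B_{\|\cdot\|}(0,1)$ and the definition of $\alpha_0$. A minor by-product of your route is that you actually obtain the strict inequality $\langle\cdot,\cdot\rangle>-\cos\alpha_0$, since $Q_{\hat v}(y'-x_0)\ge(\tfrac12-\kappa)\|v\|$ places $y'-x_0$ in the \emph{open} half-plane.
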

\begin{proof} The argument is purely geometric. Let $x_0,y,y'\in \Gamma$ satisfying $x_0\preceq y\preceq y'$. The self contracting property yields
$$\|y'-x_0\|\geq \|y'-y\|$$
which means that $x_0 \not \in B_{\|\cdot \|}(y',\|y-y'\|)$. Hence, assuming   $y'=0$ and $\|y\|=1$, to find a lower bound for the left-hand side of \eqref{boundcos} we can consider the following problem, denoting  $B:=B_{\|\cdot \|}(0,1)$, and fixing $y \in \partial B$,
\begin{eqnarray}
\inf_{x\in \R^2\setminus B} \big\langle \frac{y-x}{|y-x|}, \frac{-x}{|x|}  \big\rangle. \label{aresoudre}
\end{eqnarray}
To solve this problem, we first prove that it is equivalent to take the infimum in $x\in \partial B$. Indeed, let us define
$$\varphi(x):=\langle \frac{x-y}{|y-x|}, \frac{x}{|x|}\rangle$$
and consider the function $f$ of the real variable $t>0$ defined by  $f(t)=\varphi(tx)$. A simple computation shows that 
$\frac{d}{dt}|y-tx|=\frac{\langle tx-y,x \rangle}{|y-tx|}$ and $\frac{d}{dt}\langle tx-y,x\rangle=|x|^2$ thus
$$f'(t)=\frac{|x|^2|y-tx|^2-\langle tx-y,x\rangle^2}{|x||y-tx|^3}\geq 0,$$
due to the Cauchy-Schwarz inequality. This implies that $t\mapsto \varphi(tx)$ is non decreasing in $t$ and the infimum  in \eqref{aresoudre} is the same as: 
\begin{eqnarray}
\inf_{x\in \partial B} \big\langle \frac{x-y}{|y-x|}, \frac{x}{|x|}  \big\rangle. \label{aresoudre1}
\end{eqnarray}
But now it is easy to conclude using the convexity of $B$. Indeed, for a given $x\in \partial B$, let   $\nu_x$ be the unit inner normal  vector  to $\partial B$ at point $x$. Then by convexity of $B$,  since $y \in \partial B$ we must have 
$$\langle y-x, \nu_x\rangle\geq 0,$$
and by definition of $\alpha_0$  we deduce that
$$  \big\langle \frac{x-y}{|y-x|}, \frac{x}{|x|}  \big\rangle\geq -\cos(\alpha_0).$$
 
 %(see in particular \eqref{defalpha02}) we also have 
 %$$\langle \nu_x, -\frac{x}{|x|}  \rangle \geq \sin(\alpha_0),$$
 %which yields 
 %$$|\nu_x+\frac{x}{|x|}|^2=2-2\langle \nu_x,- \frac{x}{|x|}\rangle\leq 2(1-\sin(\alpha_0))$$
 %All this together says
 %\begin{eqnarray}
 %\big\langle \frac{x-y}{|y-x|}, \frac{x}{|x|}  \big\rangle &=& \big\langle \frac{x-y}{|y-x|}, - \nu_x  \big\rangle+   \big\langle \frac{x-y}{|y-x|}, \frac{x}{|x|} +\nu_x\rangle \notag \\
% &\geq &-|\nu_x+\frac{x}{|x|}|\notag \\ 
% &\geq &- \sqrt{2(1-\sin(\alpha_0))}
% \end{eqnarray}
\end{proof}

We are now ready to prove the main result of this paper.

\begin{proof}[Proof of Theorem \ref{mainth}]  We use the same notation $\prec$, $\Gamma$, $\Gamma(x)$, $\Omega(x)$ as before. The begining of the proof follows essentially the proof of  \cite[Theorem 3.3]{sc} which is divided into several steps. Although the first two steps are very close to the Euclidean situation,  we write here the full detail.

The first step consists in first noticing   that to prove the theorem, it is enough finding some $c_0>0$ such that for any pair of points $x,x' \in \Gamma$ with $x'\preceq x$ it holds
\begin{eqnarray}
W(\Omega(x))+c_0 |x-x'|\leq W(\Omega(x')) \label{estim2}
\end{eqnarray}
(this is Claim 1 of  \cite[Theorem 3.3]{sc}).

Indeed, letting
$t_{0}<t_{1}\ldots<t_{m}$ be any increasing sequence in $I$, and set
$x_{i}:=\gamma(t_{i})$. If \eqref{estim2} holds, then
\[
\begin{aligned} \sum_{i=0}^{m-1}|\gamma(t_{i+1})- \gamma(t_i)|
&= \sum_{i=0}^{m-1}|x_{i+1}- x_{i}| \leq \frac{1}{c_0}
\sum_{i=0}^{m-1}\big(W(\Omega(x_{i})-W(\Omega(x_{i+1}))\big) \cr& =
\frac{1}{c_0} (W(\Omega(x_{0}))-W(\Omega(x_{m}))) \leq
\frac{1}{c_0} W(\Omega(x_{0})),
\end{aligned}
\]
since the mean width $W(H)$ is a nondecreasing function of $H$ (the variable
$H$ is ordered via the set inclusion). Taking the supremum over all choices of
$t_{0}<t_{1}\ldots<t_{m}$ in $I$ we obtain  \eqref{length} for
$C=1/c_0$. 

\medskip

Therefore, the theorem will be proved, if we show that \eqref{estim2} holds
for some  constant $c_0>0$. Before
we proceed, we introduce some extra notation similar to the ones of \cite{sc} but modified with the constant $\kappa$. Indeed, let $\kappa$ be the constant given by Proposition \ref{prop2} depending only on $\|\cdot\|$ and let $\lambda:=\frac{1}{2}-\kappa>0$. Let  $x,x^{\prime}$ be fixed in $\Gamma$
with $x^{\prime}\prec x$. We set (see Figure \ref{picture1})
\begin{equation}
v_{0}:=\frac {x^{\prime}-x}{| x^{\prime}-x|} \quad \text{ and } \quad x_{0}:=x'-v_0 \frac{\lambda}{2}|x-x'|.\label{guy}
\end{equation}

 Let us also set
\begin{equation}
\xi_{0}(y)=\frac{y-x_{0}}{|y-x_{0}|}\in\mathbb{S}^{1}
,\quad\text{for any }y\in\Gamma(x). \label{guy-1}
\end{equation}
Notice that $x_{0}$, $v_{0}$ and $\xi_{0}(y)$ depend on the points $x,x^{\prime
},$ while the desired constant $c_0$ does not (may depend only on $\|\cdot\|$). To determine $c_0$, we shall again transform the problem into another one (similar to  Claim~2 of \cite[Theorem 3.3]{sc}).

\medskip

Let us assume that there exists some constants
$\delta>0$ and $\tau>0$ (depending only on $\|\cdot\|$), such that
for all $x,x^{\prime}$ in $\Gamma$ with $x^{\prime}\prec x$ (and for
$x_{0},v_{0}$ defined by \eqref{guy}), there exists
$\bar{v}\in\mathbb{S}^{1}$ such that the following two properties hold:
\begin{eqnarray}
\langle \bar{v} , v_0 \rangle \geq  \tau \label{CL20}
\end{eqnarray}
\begin{equation}
\big\langle\overline{v},\xi_{0}(y)\big\rangle\leq-\delta\quad\text{
for all }y\in\Gamma(x). \label{CL21}
\end{equation}
Then \eqref{estim2} holds true (and consequently \eqref{length} follows).

This is a slight modification of Claim 2 of  \cite[Theorem 3.3]{sc}).  To prove the latter, assume that such constants $\delta, \tau$, and a
vector $\bar{v}$ exist, so that \eqref{CL20} and \eqref{CL21} holds. Up to change $\delta$ into $\tau/2$ if necessay, we may assume that $\delta\leq \tau/2$. Set
\[
V:=\big\{v\in\mathbb{S}^{1}\,;\, | v-\overline{v} |\leq\delta
\big\}.
\]
From \eqref{CL21} we get
\begin{equation}
\langle v,y-x_{0}\rangle\leq0,\quad\text{for all }v\in V\;\text{and
} y\in\Omega(x). \label{polar}
\end{equation}

Let us now assume that $x_0$ is the origin. Recall that, for $v\in \R^2$, $P_{v}$ denotes the orthogonal projection onto the line $\mathbb{R}v$. Observe that $\Omega(x^{\prime})$ contains the convex hull of
$\Omega(x)\cup\lbrack x,x^{\prime}]$, whence
\[
P_{v}(\Omega(x))\subset P_{v}(\Omega(x^{\prime})).
\]
In particular,
\begin{equation}
\mathcal{H}^{1}(P_{v}(\Omega(x)))\leq\mathcal{H}^{1}(P_{v}(\Omega(x^{\prime
})))\ \hbox{ for }v\in\mathbb{S}^{n-1}, \label{es-1}
\end{equation}
Then
\eqref{polar} says that for all directions $v$ in $V$ we have
\[
\sup P_{v}(\Omega(x))\leq0<P_{v}(x^{\prime})\leq\sup P_{v}(\Omega(x^{\prime
})).
\]

Now for every $v\in
V$ we have
\begin{eqnarray}
\langle v,v_0\rangle= \langle \bar v , v_0\rangle+ \langle
v-\bar v,v_{0}\rangle \geq\tau -|v_{0}||v-\bar v| \geq \tau/2
\end{eqnarray}
because $\delta\leq \tau/2$.
This gives a lower bound for the length of the projected segment
$[x_{0},x^{\prime}]$ onto $\mathbb{R}v$. Precisely, recalling that 
\[
|x_{0}-x^{\prime}|=\frac{\lambda}{2}|x-x^{\prime}|,
\]

we get 

\[
P_{v}(x^{\prime})\geq \frac{\lambda \tau}{4}  |
x-x^{\prime} |  .
\]

This yields
\begin{equation}
\mathcal{H}^{1}(P_{v}(\Omega(x)))+\frac{\lambda \tau}{4}| x-x^{\prime}|
\leq\mathcal{H}^{1}(P_{v}(\Omega(x^{\prime})))\;\hbox{ for $v \in
V$.} \label{es-2}
\end{equation}
Integrating \eqref{es-2} for $v\in V$ and \eqref{es-1} for $v\in
\mathbb{S}^{1}\setminus V$, and summing up the resulting inequalities we
obtain \eqref{estim2}.

\bigskip

Consequently, our next goal is to determine $\delta>0$  and $\tau>0$ so that  \eqref{CL20} and \eqref{CL21} hold.

Here comes the point where the proof slightly differs  from  \cite[Theorem 3.3]{sc} and where the preliminary section about the curve bisector plays a role. First we notice that from the self-contracted property of the curve, the set $\Gamma(x)$ lies only on one side of the curve bisector $M(x,x')$ (the one containing $x$). On the other hand by applying Proposition \ref{prop2}, we know that $M(x,x')\subset S_{\kappa}(x,x')$ (see Proposition \ref{prop2} for the definition of $S_\kappa(x,x')$). Let $w$ be a vector parallel to $L_{v_0}$ and let  $\nu \in \mathbb{S}^1$ the  vector orthogonal to it pointing in the opposite direction with respect to $\Gamma(x)$. 

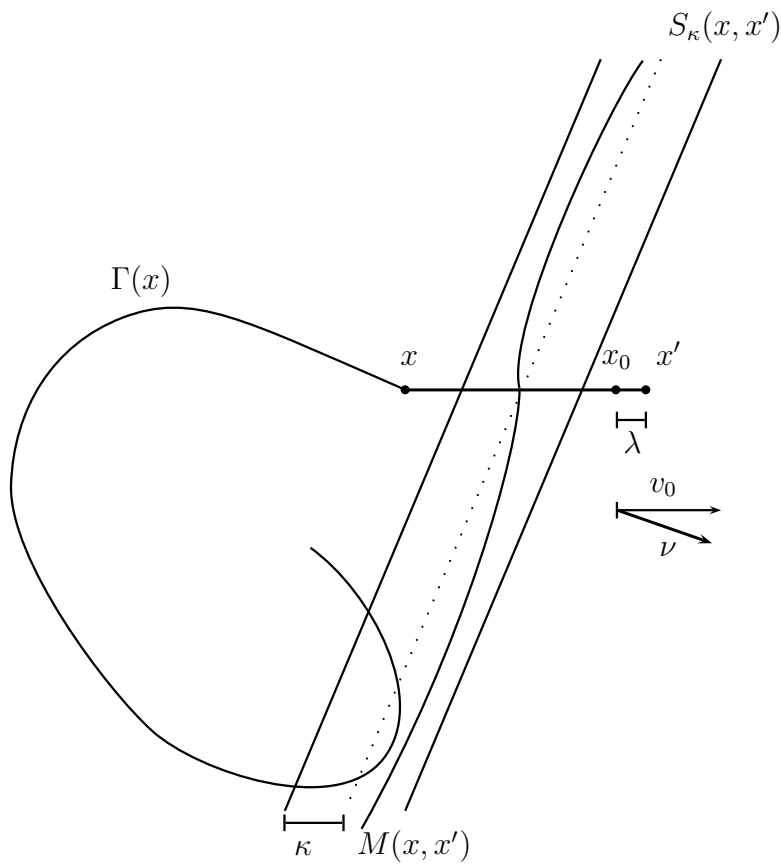
\begin{figure}[htbp]
\begin{center}
% Generated with LaTeXDraw 2.0.8
% Wed Mar 16 11:28:40 CET 2016
% \usepackage[usenames,dvipsnames]{pstricks}
% \usepackage{epsfig}
% \usepackage{pst-grad} % For gradients
% \usepackage{pst-plot} % For axes
\scalebox{1} % Change this value to rescale the drawing.
{
\begin{pspicture}(0,-5.6676955)(10.97566,5.6676955)
\psdots[dotsize=0.12](5.2737656,0.6492969)
\psdots[dotsize=0.12](8.473765,0.6492969)
\psline[linewidth=0.04cm](5.2737656,0.6492969)(8.473765,0.6492969)
\psline[linewidth=0.03cm,linestyle=dotted,dotsep=0.16cm](8.673765,5.049297)(4.473766,-4.950703)
\psline[linewidth=0.03cm](7.8737655,5.049297)(3.6737657,-4.950703)
\psline[linewidth=0.03cm](9.473765,5.049297)(5.2737656,-4.950703)
\usefont{T1}{ptm}{m}{n}
\rput(5.3252206,1.0742968){$x$}
\usefont{T1}{ptm}{m}{n}
\rput(8.775221,1.1342969){$x'$}
\usefont{T1}{ptm}{m}{n}
\rput(1.7652208,2.094297){$\Gamma(x)$}
\psdots[dotsize=0.12](8.073766,0.6492969)
\usefont{T1}{ptm}{m}{n}
\rput(8.08522,1.0542969){$x_0$}
\psline[linewidth=0.03cm,tbarsize=0.07055555cm 5.0,arrowsize=0.05291667cm 2.0,arrowlength=1.4,arrowinset=0.4]{|->}(8.073766,-0.95070314)(9.473765,-0.95070314)
\usefont{T1}{ptm}{m}{n}
\rput(8.705221,-0.64570314){$v_0$}
\psline[linewidth=0.03cm,tbarsize=0.07055555cm 5.0]{|-|*}(3.6537657,-5.110703)(4.453766,-5.110703)
\usefont{T1}{ptm}{m}{n}
\rput(3.9152207,-5.445703){$\kappa$}
\psline[linewidth=0.03cm,tbarsize=0.07055555cm 5.0]{|-|*}(8.073766,0.24929687)(8.473765,0.24929687)
\usefont{T1}{ptm}{m}{n}
\rput(8.29522,-0.045703124){$\lambda$}
\usefont{T1}{ptm}{m}{n}
\rput(9.525221,5.474297){$S_{\kappa}(x,x')$}
\psbezier[linewidth=0.03](6.7937655,0.6492969)(6.7937655,-0.15070313)(6.033766,-2.810703)(4.6937656,-5.190703)
\psbezier[linewidth=0.03](6.7937655,0.66929686)(6.5937657,1.4892969)(8.013765,4.489297)(8.433765,5.029297)
\usefont{T1}{ptm}{m}{n}
\rput(5.3952208,-5.425703){$M(x,x')$}
\psbezier[linewidth=0.03](5.2337656,0.66929686)(3.1137657,1.5892968)(2.4469426,1.9687141)(1.5137657,1.6092969)(0.5805887,1.2498796)(0.06753132,0.38872665)(0.03376566,-0.6107031)(0.0,-1.6101329)(1.3137656,-3.3507032)(1.9137657,-3.8907032)(2.5137656,-4.430703)(4.3425546,-5.0463142)(4.973766,-4.2707033)(5.6049767,-3.495092)(4.8268657,-2.032827)(4.013766,-1.4507031)
\psline[linewidth=0.04cm,arrowsize=0.05291667cm 2.0,arrowlength=1.4,arrowinset=0.4]{->}(8.093765,-0.95070314)(9.3537655,-1.3907031)
\usefont{T1}{ptm}{m}{n}
\rput(8.765221,-1.4657031){$\nu$}
\end{pspicture} 
}
\end{center}
\caption{$\Gamma(x)$ lies on one side of the curve bisector $M(x,x')$ which is contained in  the strip $S_\kappa(x,x')$. } \label{picture1}
\end{figure}

Then  it holds, thanks to our definition of $x_0$ in correspondance to the width of $S_{\kappa}$,
\begin{eqnarray}
\langle  y-x_0, \nu \rangle\leq -\frac{\lambda}{2}\leq 0 \quad \quad \forall y \in \Gamma(x). \label{tail1}
\end{eqnarray}

Moreover, the angle between $\nu$ and $v_0$ can be estimated by the constant $\alpha_0$ defined in Remark \ref{defalpha0}, which yields
$$\langle \nu, v_0\rangle \geq \cos(\frac{\pi}{2}-\alpha_0)=:\tau_1>0.$$

The vector $\nu$ is very close to satisfy the required conditions \eqref{CL20} and \eqref{CL21}. Actually, as already used in \cite{sc}, the desired vector $\bar v$   will be  constructed as a little perturbation of $\nu$, precisely, of the form
$$\nu_\varepsilon:=\frac{\nu+\varepsilon \nu^\bot}{|\nu+\varepsilon \nu^\bot|}$$
 for some $\varepsilon>0$ small, to be chosen later. 
 
 Let us check first that for any $\varepsilon\leq \tau_1/6$, it holds
 $$\langle \nu_\varepsilon, v_0\rangle\geq \tau_1/2$$ 
 so that condition \eqref{CL20} would be satisfied for any of those $\nu_\varepsilon$, with $\tau=\tau_1/2$.  Indeed, using the inequality $\sqrt{1+t}\leq 1+\frac{t}{2}$,  we obtain
 \begin{eqnarray}
|\nu-\nu_\varepsilon|^2=\frac{1}{1+\varepsilon^2}[(\sqrt{1+\varepsilon^2}-1)^2+\varepsilon^2]\leq  \frac{1}{2}\varepsilon^4+\varepsilon^2\notag 
\end{eqnarray}
which yields, for $\varepsilon\leq 1$, 
 \begin{eqnarray}
|\nu-\nu_\varepsilon|\leq 3\varepsilon.\label{estimationeps}
\end{eqnarray}
Consequently,
 $$\langle \nu_\varepsilon, v_0\rangle=\langle \nu, v_0\rangle+\langle \nu_\varepsilon-\nu, v_0\rangle\geq\tau_1-|\nu-\nu_\varepsilon|\geq \tau_1/2$$
 when $3\varepsilon\leq \tau_1/2$.

 Now it remains to find $\varepsilon\leq \tau_1/6$ that would moreover make $\nu_\varepsilon$ satisfying an inequality like  \eqref{CL21}. To do so, we decompose the half space $H:=\{y \; : \; P_{\nu}(y)\leq 0\}$ in two different regions. Let
$$H_1:=\{y \; : \; \langle \xi_0(y), \nu \rangle\leq -2\mu\}$$
and
$$H_2:=\{y \; : \;-2\mu< \langle \xi_0(y), \nu \rangle\leq 0\},$$
for some $\mu>0$ that will be chosen later. Notice  that, by \eqref{estimationeps}, if  $\varepsilon\leq \min(\tau_1/6,\mu/3)$, then
 \begin{equation}
\text{for all }y \in  H_1, \quad \quad \langle \xi_0(y) ,\nu_\varepsilon \rangle\leq \langle \xi_0(y) ,\nu \rangle + |\nu-\nu_\varepsilon| \leq -\mu, \label{firsthyp}
\end{equation}
as desired, and we only need to take care of  $H_2$. For this purpose, we furthermore decompose $H_2$ itself in the two regions, 
$$H_\mu:=H_2\cap B(x_0,1)$$
$$H'_\mu:=H_2\setminus H_\mu.$$
We know from  \eqref{tail1} that for all $y\in H_\mu$,
$$\langle \xi_0(y) ,\nu\rangle=\langle  \frac{y-x_0}{|y-x_0|}, \nu \rangle\leq -\frac{\lambda}{2|y-x_0|}\leq -\frac{\lambda}{2} .$$
Therefore, if $\varepsilon\leq \min(\alpha_0/6,\mu/3, \lambda/12)$ then \eqref{firsthyp} holds and moreover,
$$ \langle \xi_0(y) ,\nu_\varepsilon \rangle\leq \langle \xi_0(y) ,\nu \rangle + |\nu-\nu_\varepsilon| \leq -\frac{\lambda}{4},  \quad \text{for all } y \in H_\mu.$$
It remains  finally to consider $y\in H'_\mu$. For this we may assume that $2\mu\leq \frac{1}{2}$ so that $H'_\mu$ is divided in two connected components, one denoted $H^+_\mu$ lying in the upper half-space $\{x \; : \;\langle x,v_0^\bot\rangle \geq 0\}$ and another one denoted $H^-_\mu$ lying in the other part $\{x \; : \;\langle x,v_0^\bot\rangle \leq 0\}$.  Now for $\mu\in (0,\frac{1}{2})$ let us define the quantity
$$\beta_\mu:=\sup \{\langle \xi_0(x), \xi_0(y)\rangle \; : \; x\in H^+_\mu \text{ and } y \in H^-_\mu \}.$$
It is clear that $\beta_\mu\to -1$ when $\mu$ goes to $0$. Therefore, since $\alpha_0>0$, there exists $\mu>0$ small enough (depending only on  $\alpha_0$), such that $\beta_{\mu}<-\cos(\alpha_0)$. By applying  Lemma \ref{estimateL}, we infer that the following alternative holds
$$\Gamma(x)\cap H'_\mu\subset H_\mu^+\quad \text{ or }\quad \Gamma(x)\cap H'_\mu \subset H_\mu^-.$$
Next, we set  
$$ \varepsilon_0= \min(\alpha_0/6,\mu/3, \lambda/12).
$$
In the first case (i.e. when $\Gamma(x)\cap H'_\mu \subset H_\mu^+$) we define
$$\bar \nu:=\nu_{-\varepsilon_0} $$
and in the second case (i.e. when $\Gamma(x)\cap H'_\mu\subset H_\mu^-$) we define
$$\bar \nu:=\nu_{\varepsilon_0}.$$
We finally check that this choice suit our purposes. Indeed,  it is not difficult to see  that 
$$\sup_{y \in H_\mu^+}\Big\langle \xi_0(y) , \frac{v_0-\varepsilon_0 v_0^\bot}{|v_0-\varepsilon_0 v_0^\bot|} \Big\rangle=\cos\big(\frac{\pi}{2}+ \theta\big),$$
where $\theta$  is the  angle between $\nu$ and $\nu_{-\varepsilon_0}$ (see Figure \ref{picture2}).

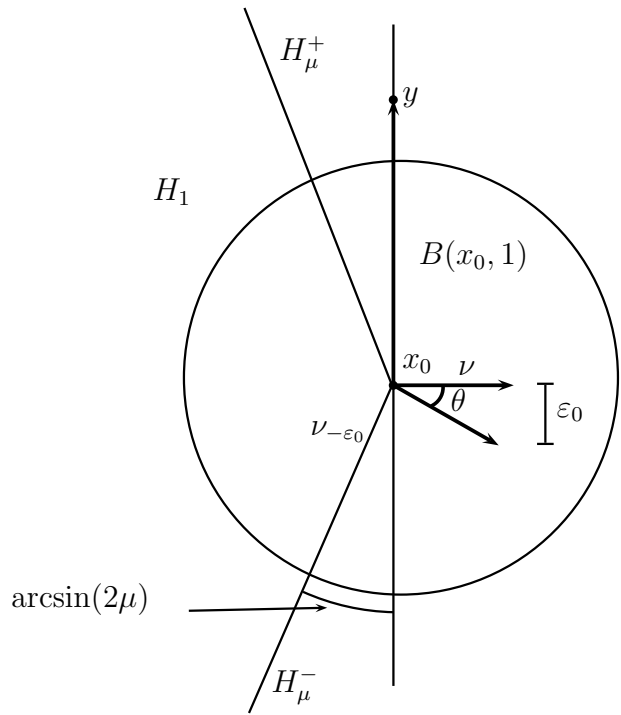
\begin{figure}[htbp]
\begin{center}
% \usepackage[usenames,dvipsnames]{pstricks}
% \usepackage{epsfig}
% \usepackage{pst-grad} % For gradients
% \usepackage{pst-plot} % For axes
% \usepackage[space]{grffile} % For spaces in paths
% \usepackage{etoolbox} % For spaces in paths
% \makeatletter % For spaces in paths
% \patchcmd\Gread@eps{\@inputcheck#1 }{\@inputcheck"#1"\relax}{}{}
% \makeatother
% \psscalebox{1.0 1.0} % Change this value to rescale the drawing.
{
\begin{pspicture}(0,-4.6957283)(8.08,4.6957283)
\psline[linecolor=black, linewidth=0.03](5.08,4.470266)(5.08,-4.3297343)
\psdots[linecolor=black, dotsize=0.12](5.08,-0.32973424)
\psline[linecolor=black, linewidth=0.05, arrowsize=0.05291667cm 2.0,arrowlength=1.4,arrowinset=0.4]{->}(5.08,-0.32973424)(6.68,-0.32973424)
\psline[linecolor=black, linewidth=0.05, arrowsize=0.05291667cm 2.0,arrowlength=1.4,arrowinset=0.4]{->}(5.08,-0.32973424)(6.48,-1.1297343)
\psline[linecolor=black, linewidth=0.03, tbarsize=0.07055555cm 5.0]{|*-|*}(7.1,-0.30973426)(7.1,-1.1097343)
\rput[bl](7.251111,-0.81195647){\normalsize{$\varepsilon_0$}}
\rput[bl](5.94,-0.18973425){\normalsize{$\nu$}}
\rput[bl](3.98,-1.0897342){\normalsize{$\nu_{-\varepsilon_0}$}}
\rput[bl](0.0,-3.3897343){\normalsize{$\arcsin(2\mu)$}}
\pscircle[linecolor=black, linewidth=0.03, dimen=outer](5.18,-0.22973424){2.9}
\rput[bl](5.42,1.1702658){\normalsize{$B(x_0,1)$}}
\rput[bl](5.1985188,-0.16232684){\normalsize{$x_0$}}
\rput[bl](3.56,3.8302658){\normalsize{$H^+_\mu$}}
\rput[bl](3.46,-4.569734){\normalsize{$H_\mu^-$}}
\psdots[linecolor=black, dotsize=0.12](5.08,3.4702659)
\rput[bl](5.2,3.3702657){\normalsize{$y$}}
\psline[linecolor=black, linewidth=0.05, arrowsize=0.05291667cm 2.0,arrowlength=1.4,arrowinset=0.4]{->}(5.08,-0.32973424)(5.08,3.4702659)
\rput[bl](1.88,2.0702658){\normalsize{$H_1$}}
\rput[bl](5.8414817,-0.6749194){\normalsize{$\theta$}}
\psarc[linecolor=black, linewidth=0.048, dimen=outer](5.47,-0.35973424){0.27}{-65.85446}{3.0127876}
\psline[linecolor=black, linewidth=0.03](5.062222,-0.3326972)(3.1,4.6902657)
\psline[linecolor=black, linewidth=0.03](5.065185,-0.32528982)(3.16,-4.6897345)
\psarc[linecolor=black, linewidth=0.03, dimen=outer](5.15,-0.23973425){3.11}{-114.14554}{-91.20424}
\psline[linecolor=black, linewidth=0.03, arrowsize=0.05291667cm 2.0,arrowlength=1.4,arrowinset=0.4]{->}(2.36,-3.3297343)(4.2,-3.2897344)
\end{pspicture}
}
\end{center}
\caption{The minimum   angle between  $y-x_0$ and $\nu_{-\varepsilon_0}$ for $y \in H^+_{\mu}$ is achieved when $y-x_0 \in \nu^\bot$. } \label{picture2}
\end{figure}

This yields
$$\sup_{y \in H_\mu^+}\Big\langle \xi_0(y) , \frac{v_0-\varepsilon_0 v_0^\bot}{|v_0-\varepsilon_0 v_0^\bot|} \Big\rangle=-\frac{\varepsilon_0}{\sqrt{1+\varepsilon_0}} \leq - \frac{\varepsilon_0}{2},$$
provided that $\varepsilon_0\leq 1$.

Gathering all the estimates together, we have found some $\bar \nu$ satisfying  
$$\sup_{y\in \Gamma(x)}  \langle  \xi_0(y) ,  \bar \nu \rangle\leq-\tau,$$
with $\tau =\min(\mu, \frac{\lambda}{4},\frac{\varepsilon_0}{2})$, and this finishes the proof of the Theorem.
\end{proof}

\end{document}